\newtheorem{theorem}{Theorem}
\theoremstyle{plain}
\newtheorem{corollary}{Corollary}
\newtheorem{lemma}{Lemma}
\newtheorem{remark}{Remark}
\numberwithin{equation}{section}
\begin{document}
\title[Simpson Type Inequalities for $m-$convex Functions]{Simpson Type
Inequalities for $m-$convex Functions}
\author{M.E. \"{O}zdemir$^{\bigstar }$}
\address{$^{\bigstar }$Atat\"{u}rk University, K.K. Education Faculty,
Department of Mathematics, 25240 Campus, Erzurum, Turkey}
\email{emos@atauni.edu.tr}
\author{Merve Avci$^{\bigstar \blacklozenge }$}
\email{merve.avci@atauni.edu.tr}
\thanks{$^{\blacklozenge }$Corresponding author}
\author{Havva Kavurmaci$^{\bigstar }$}
\email{hkavurmaci@atauni.edu.tr}
\keywords{Simpson type inequality, $m-$convex function, H\"{o}lder
inequality, Power-mean inequality}

\begin{abstract}
In this paper, we establish some new inequalities for functions whose third
derivatives in the absolute value are $m-$convex.
\end{abstract}

\maketitle

\section{introduction}

The following inequality is well known in the literature as Simpson's
inequality:%
\begin{eqnarray}
&&\left\vert \int_{a}^{b}f(x)dx-\frac{b-a}{3}\left[ \frac{f(a)+f(b)}{2}%
+2f\left( \frac{a+b}{2}\right) \right] \right\vert  \label{1.1} \\
&\leq &\frac{1}{2880}\left\Vert f^{(4)}\right\Vert _{\infty }\left(
b-a\right) ^{5},  \notag
\end{eqnarray}%
where the mapping $f:[a,b]\rightarrow 
\mathbb{R}
$ is assumed to be four times continuously differentiable on the interval
and $f^{(4)}$ to be bounded on $(a,b)$ , that is,%
\begin{equation*}
\left\Vert f^{(4)}\right\Vert _{\infty }=\sup_{t\in (a,b)}\left\vert
f^{(4)}(t)\right\vert <\infty .
\end{equation*}%
In \cite{T}, G.Toader defined the concept of $m-$convexity as the following:
The function $f:[0,b]\rightarrow 
\mathbb{R}
$ is said to be $m-$convex, where $m\in \lbrack 0,1],$ if for every $x,y\in
\lbrack 0,b]$ and $t\in \lbrack 0,1]$ we have:%
\begin{equation*}
f(tx+m(1-t)y)\leq tf(x)+m(1-t)f(y).
\end{equation*}%
Denote by $K_{m}(b)$ the set of the $m-$convex functions on $[a,b]$ for
which $f(0)\leq 0.$

Some important inequalities for $m-$convex functions can be found in \cite%
{OAS}-\cite{BOP}.

In \cite{AH}, Alomari and Hussain used the following lemma in order to
establish some inequalities for $P-$convex functions.

\begin{lemma}
\label{lem 1.1} Let $f:I\rightarrow 
\mathbb{R}
$ be a function such that $f^{\prime \prime \prime \text{ }}$be absolutely
continuous on $I^{\circ }$, the interior of I. Assume that $a,b\in I^{\circ
},$ with $a<b$ and $f^{\prime \prime \prime \text{ }}\in L[a,b].$ Then, the
following equality holds,%
\begin{eqnarray*}
&&\int_{a}^{b}f(x)dx-\frac{b-a}{6}\left[ f(a)+4f\left( \frac{a+b}{2}\right)
+f(b)\right] \\
&=&\left( b-a\right) ^{4}\int_{0}^{1}p(t)f^{\prime \prime \prime
}(ta+(1-t)b)dt,
\end{eqnarray*}%
where%
\begin{equation*}
p(t)=\left\{ 
\begin{array}{c}
\frac{1}{6}t^{2}\left( t-\frac{1}{2}\right) ,\text{ \ \ \ }t\in \lbrack 0,%
\frac{1}{2}] \\ 
\\ 
\frac{1}{6}(t-1)^{2}\left( t-\frac{1}{2}\right) ,\text{ \ \ }t\in (\frac{1}{2%
},]\text{\ \ \ .\ \ \ \ }%
\end{array}%
\right.
\end{equation*}
\end{lemma}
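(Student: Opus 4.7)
My plan is to establish the identity by computing the right-hand side via three successive integrations by parts on each of the two pieces determined by the kernel $p(t)$, and then showing that the boundary contributions telescope into the Simpson sum while the leftover integral reduces to $\int_{a}^{b} f(x)\,dx$ after a linear substitution.

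First I would split the integral on the right as
\begin{equation*}
(b-a)^{4}\int_{0}^{1}p(t)f'''(ta+(1-t)b)\,dt = (b-a)^{4}\left[\int_{0}^{1/2}p_{1}(t)f'''(u)\,dt + \int_{1/2}^{1}p_{2}(t)f'''(u)\,dt\right],
\end{equation*}
where $u=ta+(1-t)b$, $p_{1}(t)=\tfrac{1}{6}t^{2}(t-\tfrac{1}{2})$ and $p_{2}(t)=\tfrac{1}{6}(t-1)^{2}(t-\tfrac{1}{2})$. Using the chain-rule identity $\frac{d}{dt}f^{(k-1)}(u)=(a-b)f^{(k)}(u)$, each integration by parts trades a derivative on $f$ for a factor of $\tfrac{1}{a-b}$ together with a boundary term. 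Iterating three times on each piece lowers $f'''$ to $f$ and produces boundary values involving $p_{i}, p_{i}', p_{i}''$ evaluated at $\{0,\tfrac{1}{2}\}$ or $\{\tfrac{1}{2},1\}$.

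Next I would assemble the boundary contributions. A direct check shows $p_{1}(0)=p_{1}(\tfrac{1}{2})=p_{2}(\tfrac{1}{2})=p_{2}(1)=0$, so all $f''$ boundary terms drop out. The $f'$ boundary terms are determined by $p_{1}'(0)=0$, $p_{1}'(\tfrac{1}{2})=\tfrac{1}{24}$, $p_{2}'(\tfrac{1}{2})=\tfrac{1}{24}$, $p_{2}'(1)=0$; the contributions at the shared endpoint $t=\tfrac{1}{2}$ appear with opposite signs from the two pieces and cancel exactly. The surviving $f$ boundary contributions involve $p_{1}''(0)=-\tfrac{1}{6}$, $p_{1}''(\tfrac{1}{2})=\tfrac{1}{3}$, $p_{2}''(\tfrac{1}{2})=-\tfrac{1}{3}$, $p_{2}''(1)=\tfrac{1}{6}$, which I expect to combine into $\tfrac{1}{6(a-b)^{3}}\bigl[f(a)+4f(\tfrac{a+b}{2})+f(b)\bigr]$.

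Finally, since $p_{i}'''\equiv 1$ on each piece, the remaining integral is $-\tfrac{1}{(a-b)^{3}}\int_{0}^{1}f(u)\,dt$; the substitution $x=ta+(1-t)b$ turns this into $-\tfrac{1}{(a-b)^{3}(b-a)}\int_{a}^{b}f(x)\,dx$. Multiplying everything by $(b-a)^{4}$ and using $(b-a)^{4}/(a-b)^{3}=-(b-a)$ yields the stated identity after rearrangement. The main obstacle will be the sign and coefficient bookkeeping at the interior node $t=\tfrac{1}{2}$, where both pieces contribute boundary data and the cancellations must align so that only the Simpson weights $1,4,1$ survive on the final $f$ terms.
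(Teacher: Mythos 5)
Your proposal is correct: the kernel values you list ($p_1(0)=p_1(\tfrac12)=p_2(\tfrac12)=p_2(1)=0$, $p_1'(\tfrac12)=p_2'(\tfrac12)=\tfrac1{24}$, $p_1''(0)=-\tfrac16$, $p_1''(\tfrac12)=\tfrac13$, $p_2''(\tfrac12)=-\tfrac13$, $p_2''(1)=\tfrac16$, $p_i'''\equiv 1$) all check out, the $f'$ contributions at $t=\tfrac12$ cancel as you say, and the surviving terms assemble into $\tfrac{1}{6(a-b)^3}\bigl[f(a)+4f(\tfrac{a+b}{2})+f(b)\bigr]$ minus the rescaled integral, giving the identity after multiplication by $(b-a)^4$. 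This is exactly the route the paper indicates for its own version of the lemma (repeated integration by parts on each piece of $p$ followed by the substitution $x=ta+(1-t)b$), so your argument coincides with the intended proof.
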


In \cite{AHO}, Avci et. al obtained following results using the above lemma.

\begin{theorem}
\label{teo 1.1} Let $f:I\subset \lbrack 0,\infty )\rightarrow 
\mathbb{R}
$ be a differentiable function on $I^{\circ }$ such that $f^{\prime \prime
\prime \text{ }}\in L[a,b],$ where $a,b\in I^{\circ }$ with $a<b.$ If $%
\left\vert f^{\prime \prime \prime \text{ }}\right\vert ^{q}$ is $s-$convex
in the second sense on $[a,b]$ and for some fixed $s\in (0,1]$ and $q>1$
with $\frac{1}{p}+\frac{1}{q}=1,$ then the following inequality holds:%
\begin{eqnarray*}
&&\left\vert \int_{a}^{b}f(x)dx-\frac{b-a}{6}\left[ f(a)+4f\left( \frac{a+b}{%
2}\right) +f(b)\right] \right\vert \\
&\leq &\frac{\left( b-a\right) ^{4}}{48}\left( \frac{1}{2}\right) ^{\frac{1}{%
p}}\left( \frac{\Gamma (2p+1)\Gamma (p+1)}{\Gamma (3p+2)}\right) ^{\frac{1}{p%
}} \\
&&\times \left\{ \left[ \frac{1}{2^{s+1}(s+1)}\left\vert f^{\prime \prime
\prime \text{ }}(a)\right\vert ^{q}+\frac{2^{s+1}-1}{2^{s+1}(s+1)}\left\vert
f^{\prime \prime \prime \text{ }}(b)\right\vert ^{q}\right] ^{\frac{1}{q}%
}\right. \\
&&\left. +\left[ \frac{2^{s+1}-1}{2^{s+1}(s+1)}\left\vert f^{\prime \prime
\prime \text{ }}(a)\right\vert ^{q}+\frac{1}{2^{s+1}(s+1)}\left\vert
f^{\prime \prime \prime \text{ }}(b)\right\vert ^{q}\right] ^{\frac{1}{q}%
}\right\} .
\end{eqnarray*}
\end{theorem}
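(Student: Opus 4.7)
\medskip

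\noindent\textbf{Proof proposal.} The plan is to start from the integral identity in Lemma \ref{lem 1.1}, pass to absolute values, split the resulting integral at $t=1/2$ according to the two branches of $p(t)$, and then apply H\"{o}lder's inequality separately on each piece.

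First, from Lemma \ref{lem 1.1} the triangle inequality gives
\begin{equation*}
\left\vert \int_{a}^{b}f(x)dx-\tfrac{b-a}{6}\left[ f(a)+4f\!\left( \tfrac{a+b}{2}\right) +f(b)\right] \right\vert \leq (b-a)^{4}\int_{0}^{1}|p(t)|\,|f'''(ta+(1-t)b)|\,dt,
\end{equation*}
which I would split as $\int_{0}^{1/2}+\int_{1/2}^{1}$. Note that on $[0,1/2]$ the factor $t-\tfrac{1}{2}$ is nonpositive so $|p(t)|=\tfrac{1}{6}t^{2}(\tfrac{1}{2}-t)$, while on $[1/2,1]$ we have $|p(t)|=\tfrac{1}{6}(1-t)^{2}(t-\tfrac{1}{2})$.

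Next, on each subinterval I would apply H\"{o}lder with exponents $p,q$ to separate the kernel from $|f'''|$. For the $|f'''|^{q}$-factor, $s$-convexity in the second sense yields
\begin{equation*}
|f'''(ta+(1-t)b)|^{q}\leq t^{s}|f'''(a)|^{q}+(1-t)^{s}|f'''(b)|^{q},
\end{equation*}
so the $|f'''|^{q}$ integrals reduce to elementary computations of $\int t^{s}\,dt$ and $\int (1-t)^{s}\,dt$ over $[0,1/2]$ and $[1/2,1]$. These produce the coefficients $\tfrac{1}{2^{s+1}(s+1)}$ and $\tfrac{2^{s+1}-1}{2^{s+1}(s+1)}$ appearing in the theorem, in the two arrangements corresponding to the two subintervals.

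The main computational step—and the place I expect the only real work—is the kernel integral. For the first piece I would make the substitution $t=u/2$ to transform
\begin{equation*}
\int_{0}^{1/2}\left[\tfrac{1}{6}t^{2}(\tfrac{1}{2}-t)\right]^{p}dt \;=\; \tfrac{1}{6^{p}\,2^{3p+1}}\int_{0}^{1}u^{2p}(1-u)^{p}\,du \;=\; \tfrac{1}{2\cdot 48^{p}}\,\tfrac{\Gamma(2p+1)\Gamma(p+1)}{\Gamma(3p+2)},
\end{equation*}
recognizing the Beta integral $B(2p+1,p+1)$. Taking the $1/p$-th power produces exactly $\tfrac{1}{48}(1/2)^{1/p}\bigl(\tfrac{\Gamma(2p+1)\Gamma(p+1)}{\Gamma(3p+2)}\bigr)^{1/p}$. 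The parallel substitution $t=1-u/2$ on the second piece shows, by symmetry, that the kernel integral on $[1/2,1]$ has the same value. Summing the two H\"{o}lder estimates and factoring the common constant gives the claimed inequality.
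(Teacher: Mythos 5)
Your proposal is correct and follows exactly the scheme the paper itself uses to prove the analogous $m$-convex result (Theorem \ref{teo 2.1}): the integral identity of the Lemma, splitting at $t=\frac{1}{2}$, H\"{o}lder's inequality on each piece, the convexity bound on $|f'''|^{q}$, and the Beta-integral evaluation of the kernel. The paper only quotes Theorem \ref{teo 1.1} from \cite{AHO} without proof, but your computations all check out — in particular $\int_{0}^{1/2}\bigl[\frac{1}{6}t^{2}(\frac{1}{2}-t)\bigr]^{p}dt=\frac{1}{2\cdot 48^{p}}\,\frac{\Gamma(2p+1)\Gamma(p+1)}{\Gamma(3p+2)}$ and the coefficients $\frac{1}{2^{s+1}(s+1)}$ and $\frac{2^{s+1}-1}{2^{s+1}(s+1)}$ — and they reproduce the stated bound exactly.
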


\begin{corollary}
\label{co 1.1} If we choose $s=1$ in Theorem \ref{teo 1.1}, we have%
\begin{eqnarray}
&&\left\vert \int_{a}^{b}f(x)dx-\frac{b-a}{6}\left[ f(a)+4f\left( \frac{a+b}{%
2}\right) +f(b)\right] \right\vert  \label{1.2} \\
&\leq &\frac{\left( b-a\right) ^{4}}{96}\left( \frac{1}{4}\right) ^{\frac{1}{%
q}}\left( \frac{\Gamma (2p+1)\Gamma (p+1)}{\Gamma (3p+2)}\right) ^{\frac{1}{p%
}}  \notag \\
&&\times \left\{ \left( \left\vert f^{\prime \prime \prime \text{ }%
}(a)\right\vert ^{q}+3\left\vert f^{\prime \prime \prime \text{ }%
}(b)\right\vert ^{q}\right) ^{\frac{1}{q}}+\left( 3\left\vert f^{\prime
\prime \prime \text{ }}(a)\right\vert ^{q}+\left\vert f^{\prime \prime
\prime \text{ }}(b)\right\vert ^{q}\right) ^{\frac{1}{q}}\right\} .  \notag
\end{eqnarray}
\end{corollary}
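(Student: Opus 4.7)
The plan is essentially a direct specialization of Theorem~\ref{teo 1.1} to the case $s=1$, followed by bookkeeping of the constants. No new analytic ingredient is required: every $s$-convex function is $1$-convex (ordinary convex), so the hypothesis transfers without change, and the conclusion follows from the theorem by substituting $s=1$ into the displayed bound.

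First I would compute the coefficients that depend on $s$. With $s=1$ we have $2^{s+1}=4$ and $s+1=2$, hence
\begin{equation*}
\frac{1}{2^{s+1}(s+1)} = \frac{1}{8}, \qquad \frac{2^{s+1}-1}{2^{s+1}(s+1)} = \frac{3}{8}.
\end{equation*}
Substituting these values into the two bracketed expressions of Theorem~\ref{teo 1.1} gives
\begin{equation*}
\left[\tfrac{1}{8}\lvert f'''(a)\rvert^{q}+\tfrac{3}{8}\lvert f'''(b)\rvert^{q}\right]^{1/q} + \left[\tfrac{3}{8}\lvert f'''(a)\rvert^{q}+\tfrac{1}{8}\lvert f'''(b)\rvert^{q}\right]^{1/q}.
\end{equation*}
I would then factor $(1/8)^{1/q}$ out of each $q$th-root, producing the two combinations $\bigl(\lvert f'''(a)\rvert^{q}+3\lvert f'''(b)\rvert^{q}\bigr)^{1/q}$ and $\bigl(3\lvert f'''(a)\rvert^{q}+\lvert f'''(b)\rvert^{q}\bigr)^{1/q}$ that appear in the statement.

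The only remaining task is to absorb $(1/2)^{1/p}(1/8)^{1/q}$ into the prefactor. Using $\frac{1}{p}+\frac{1}{q}=1$,
\begin{equation*}
\left(\tfrac{1}{2}\right)^{1/p}\left(\tfrac{1}{8}\right)^{1/q} = 2^{-1/p-3/q} = 2^{-(1/p+1/q)-2/q} = \tfrac{1}{2}\left(\tfrac{1}{4}\right)^{1/q},
\end{equation*}
so that $\tfrac{(b-a)^{4}}{48}\cdot\tfrac{1}{2}\bigl(\tfrac{1}{4}\bigr)^{1/q} = \tfrac{(b-a)^{4}}{96}\bigl(\tfrac{1}{4}\bigr)^{1/q}$, which is exactly the constant displayed in \eqref{1.2}. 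Since this exercise is purely algebraic, there is no serious obstacle; the one point worth double-checking is the arithmetic of the exponents of $2$, where sign errors are easy to make.
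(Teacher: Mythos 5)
Your derivation is correct and is exactly the intended one: the paper states this corollary (quoted from \cite{AHO}) without proof as the direct $s=1$ specialization of Theorem \ref{teo 1.1}, and your arithmetic — the coefficients $1/8$ and $3/8$, the factoring of $(1/8)^{1/q}$, and the identity $(1/2)^{1/p}(1/8)^{1/q}=\tfrac12(1/4)^{1/q}$ via $\tfrac1p+\tfrac1q=1$ — all checks out. One inessential quibble: your opening remark that ``every $s$-convex function is $1$-convex'' is false in general (for $s\in(0,1)$, $s$-convexity in the second sense does not imply convexity) and is not needed, since the corollary simply takes $s=1$ as the hypothesis rather than transferring it from another value of $s$.
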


\begin{theorem}
\label{teo 1.2} Suppose that all the assumptions of Theorem \ref{teo 2.1}
are satisfied. Then%
\begin{eqnarray*}
&&\left\vert \int_{a}^{b}f(x)dx-\frac{b-a}{6}\left[ f(a)+4f\left( \frac{a+b}{%
2}\right) +f(b)\right] \right\vert \\
&\leq &\frac{\left( b-a\right) ^{4}}{6}\left( \frac{1}{192}\right) ^{1-\frac{%
1}{q}} \\
&&\times \left\{ \left( \frac{2^{-4-s}}{(3+s)(4+s)}\left\vert f^{\prime
\prime \prime \text{ }}(a)\right\vert ^{q}+\frac{2^{-4-s}\left(
34+2^{4+s}(-2+s)+11s+s^{2}\right) }{(1+s)(2+s)(3+s)(4+s)}\left\vert
f^{\prime \prime \prime \text{ }}(b)\right\vert ^{q}\right) ^{\frac{1}{q}%
}\right. \\
&&\left. +\left( \frac{2^{-4-s}\left( 34+2^{4+s}(-2+s)+11s+s^{2}\right) }{%
(1+s)(2+s)(3+s)(4+s)}\left\vert f^{\prime \prime \prime \text{ }%
}(a)\right\vert ^{q}+\frac{2^{-4-s}}{(3+s)(4+s)}\left\vert f^{\prime \prime
\prime \text{ }}(b)\right\vert ^{q}\right) ^{\frac{1}{q}}\right\} .
\end{eqnarray*}
\end{theorem}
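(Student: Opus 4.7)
The plan is to follow the same road as the proof of Theorem \ref{teo 1.1} but to substitute the power-mean inequality for H\"older's inequality; the exponents $1-\tfrac{1}{q}$ and $\tfrac{1}{q}$ in the stated bound are the fingerprint of this choice. I would begin with Lemma \ref{lem 1.1}, take absolute values of both sides, and split the outer integral at $t=\tfrac{1}{2}$ to obtain
\[
\left|\int_a^b f(x)\,dx - \tfrac{b-a}{6}\!\left[f(a)+4f\!\left(\tfrac{a+b}{2}\right)+f(b)\right]\right| \le (b-a)^4(I_1+I_2),
\]
with $I_1 = \int_0^{1/2}|p(t)|\,|f'''(ta+(1-t)b)|\,dt$ and $I_2$ the corresponding integral over $[\tfrac{1}{2},1]$. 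On $[0,\tfrac{1}{2}]$ one has $|p(t)|=\tfrac{1}{6}t^2(\tfrac{1}{2}-t)$, so I would apply the power-mean inequality with weight $g(t)=t^2(\tfrac{1}{2}-t)$ and $h(t)=|f'''(ta+(1-t)b)|$. Since a direct computation gives $\int_0^{1/2} t^2(\tfrac{1}{2}-t)\,dt = \tfrac{1}{192}$, this step extracts exactly the factor $\tfrac{1}{6}\bigl(\tfrac{1}{192}\bigr)^{1-1/q}$ that appears in the theorem.

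Next, using $s$-convexity of $|f'''|^q$ gives the pointwise bound $|f'''(ta+(1-t)b)|^q \le t^s|f'''(a)|^q+(1-t)^s|f'''(b)|^q$, which reduces the inner integral to the two moments $\int_0^{1/2} t^{s+2}(\tfrac{1}{2}-t)\,dt$ and $\int_0^{1/2} t^{2}(\tfrac{1}{2}-t)(1-t)^s\,dt$. The first is elementary and collapses to $\tfrac{2^{-s-4}}{(s+3)(s+4)}$, matching the coefficient of $|f'''(a)|^q$ in the theorem. For the second half $I_2$ I would perform the substitution $u=1-t$, which exchanges the roles of $a$ and $b$ and shows that $I_2$ is the mirror image of $I_1$; this symmetry produces the second bracketed term in the conclusion and halves the remaining work.

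The main obstacle is the second moment $M_s:=\int_0^{1/2} t^2(\tfrac{1}{2}-t)(1-t)^s\,dt$, which resists direct binomial expansion because $s$ is generally non-integer. I would attack it by the substitution $u=1-t$, expand $(1-u)^2(u-\tfrac{1}{2}) = u^3 - \tfrac{5}{2}u^2 + 2u - \tfrac{1}{2}$, and integrate term-by-term against $u^s$ on $[\tfrac{1}{2},1]$. Placing the four resulting fractions over the common denominator $(s+1)(s+2)(s+3)(s+4)$ and evaluating carefully at $u=1$ and $u=\tfrac{1}{2}$ is the tedious but mechanical step; the numerator at $u=1$ simplifies to $s-2$ and the numerator at $u=\tfrac{1}{2}$ to $-2^{-s-4}(s^2+11s+34)$, so their difference yields the ungainly expression $34+2^{s+4}(s-2)+11s+s^2$ over the same quartic denominator, exactly as stated. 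Assembling the resulting bounds for $I_1$ and $I_2$, pulling out $(b-a)^4/6$, and using the symmetry between the two halves completes the proof.
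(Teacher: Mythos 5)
Your proposal is correct: the power-mean inequality with weight $t^{2}\left(\frac{1}{2}-t\right)$ (resp.\ $(t-1)^{2}\left(t-\frac{1}{2}\right)$), the value $\int_{0}^{1/2}t^{2}\left(\frac{1}{2}-t\right)dt=\frac{1}{192}$, the $s$-convexity bound, and your evaluations of the two moments (in particular $\frac{2^{-s-4}}{(s+3)(s+4)}$ and the numerator $s-2+2^{-s-4}(s^{2}+11s+34)$) all check out and reproduce the stated coefficients exactly. The paper quotes this theorem from \cite{AHO} without reproducing its proof, but your argument is essentially the same route the paper takes for the analogous Theorem \ref{teo 2.2} (power-mean inequality plus the convexity estimate on each half-interval and the $t\mapsto 1-t$ symmetry), so there is nothing to add.
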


\begin{corollary}
\label{co 1.2} If we choose $s=1$ in Theorem \ref{teo 1.2}, we have%
\begin{eqnarray}
&&\left\vert \int_{a}^{b}f(x)dx-\frac{b-a}{6}\left[ f(a)+4f\left( \frac{a+b}{%
2}\right) +f(b)\right] \right\vert  \label{1.3} \\
&\leq &\frac{\left( b-a\right) ^{4}}{1152}\left\{ \left( \frac{3\left\vert
f^{\prime \prime \prime \text{ }}(a)\right\vert ^{q}+7\left\vert f^{\prime
\prime \prime \text{ }}(b)\right\vert ^{q}}{10}\right) ^{\frac{1}{q}}+\left( 
\frac{7\left\vert f^{\prime \prime \prime \text{ }}(a)\right\vert
^{q}+3\left\vert f^{\prime \prime \prime \text{ }}(b)\right\vert ^{q}}{10}%
\right) ^{\frac{1}{q}}\right\} .  \notag
\end{eqnarray}
\end{corollary}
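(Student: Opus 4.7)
The plan is to obtain Corollary \ref{co 1.2} as a direct specialization of Theorem \ref{teo 1.2} at $s=1$, so the whole argument reduces to substituting and simplifying the coefficients; no new analytic ingredient is needed.

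First, I would evaluate each of the $s$-dependent constants appearing inside the two brackets of Theorem \ref{teo 1.2} at $s=1$. Specifically, $2^{-4-s}=2^{-5}=1/32$, $(3+s)(4+s)=4\cdot 5=20$, the quartic expansion $34+2^{4+s}(-2+s)+11s+s^{2}$ collapses to $34-32+11+1=14$, and $(1+s)(2+s)(3+s)(4+s)=2\cdot 3\cdot 4\cdot 5=120$. Hence the coefficient of $\left\vert f^{\prime\prime\prime}(a)\right\vert ^{q}$ in the first bracket becomes $\tfrac{1}{32\cdot 20}=\tfrac{1}{640}$ and that of $\left\vert f^{\prime\prime\prime}(b)\right\vert ^{q}$ becomes $\tfrac{14}{32\cdot 120}=\tfrac{7}{1920}$, with the roles swapped in the second bracket.

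Next I would pull out a common factor of $\tfrac{1}{192}$ from each bracket. One checks $\tfrac{1}{640}=\tfrac{1}{192}\cdot\tfrac{3}{10}$ and $\tfrac{7}{1920}=\tfrac{1}{192}\cdot\tfrac{7}{10}$, so the first bracket equals $\tfrac{1}{192}\bigl(\tfrac{3\left\vert f^{\prime\prime\prime}(a)\right\vert ^{q}+7\left\vert f^{\prime\prime\prime}(b)\right\vert ^{q}}{10}\bigr)$, and the second becomes the symmetric expression. Taking $q$-th roots turns the factored constant into $(1/192)^{1/q}$.

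Finally, combining this with the outer prefactor $\tfrac{(b-a)^{4}}{6}\bigl(\tfrac{1}{192}\bigr)^{1-1/q}$ collapses the $(1/192)^{1/q}$ and $(1/192)^{1-1/q}$ pieces into a single $\tfrac{1}{192}$, and $\tfrac{1}{6\cdot 192}=\tfrac{1}{1152}$ produces exactly the constant $\tfrac{(b-a)^{4}}{1152}$ stated in \eqref{1.3}. The only real obstacle here is clerical: carefully tracking the minus sign in $2^{4+s}(-2+s)$ and making sure the factor $\tfrac{1}{192}$ is correctly split between the inner bracket and the outer prefactor before taking $q$-th roots; once that bookkeeping is in place, the corollary follows immediately.
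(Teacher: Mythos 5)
Your substitution is correct: at $s=1$ the constants evaluate to $2^{-4-s}=1/32$, $(3+s)(4+s)=20$, $34+2^{4+s}(-2+s)+11s+s^2=14$, and $(1+s)(2+s)(3+s)(4+s)=120$, giving bracket coefficients $\tfrac{1}{640}=\tfrac{1}{192}\cdot\tfrac{3}{10}$ and $\tfrac{7}{1920}=\tfrac{1}{192}\cdot\tfrac{7}{10}$, and the factor $(1/192)^{1/q}$ combines with $(1/192)^{1-1/q}$ and the outer $\tfrac{1}{6}$ to give $\tfrac{1}{1152}$, exactly as in \eqref{1.3}. This is precisely the direct specialization the paper intends (it gives no separate proof of the corollary), so your argument matches the paper's route.
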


In \cite{BBS}, Barani et. al proved the following results.

\begin{theorem}
\label{teo 1.3} Let $f:I\rightarrow 
\mathbb{R}
$ be a function such that $f^{\prime \prime \prime \text{ }}$be absolutely
continuous on $I^{\circ }.$ Assume that $a,b\in I^{\circ },$ with $a<b$ and $%
f^{\prime \prime \prime \text{ }}\in L[a,b].$ If $\left\vert f^{\prime
\prime \prime \text{ }}\right\vert $ is a $P-$convex function on $[a,b]$
then, the following inequality holds:%
\begin{eqnarray*}
&&\left\vert \int_{a}^{b}f(x)dx-\frac{b-a}{6}\left[ f(a)+4f\left( \frac{a+b}{%
2}\right) +f(b)\right] \right\vert \\
&\leq &\frac{\left( b-a\right) ^{4}}{1152}\left\{ \left\vert f^{\prime
\prime \prime \text{ }}(a)\right\vert +\left\vert f^{\prime \prime \prime 
\text{ }}\left( \frac{a+b}{2}\right) \right\vert +\left\vert f^{\prime
\prime \prime \text{ }}(b)\right\vert \right\} .
\end{eqnarray*}
\end{theorem}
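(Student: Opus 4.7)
The plan is to start from the integral representation in Lemma \ref{lem 1.1}, apply the triangle inequality, and then split the $t$-integral at $\frac{1}{2}$ to match the two branches of $p(t)$. After invoking the lemma and taking absolute values,
\[
\left|\int_a^b f(x)\,dx - \frac{b-a}{6}\left[f(a)+4f\!\left(\tfrac{a+b}{2}\right)+f(b)\right]\right| \leq (b-a)^4 \int_0^1 |p(t)|\,\left|f^{\prime\prime\prime}(ta+(1-t)b)\right|\,dt.
\]

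The next step is to use the $P$-convexity of $|f^{\prime\prime\prime}|$ cleanly. On $[0,\tfrac{1}{2}]$ the argument $ta+(1-t)b$ lies in $\left[\tfrac{a+b}{2},b\right]$, and I would re-express it as the convex combination $(2t)\cdot\tfrac{a+b}{2} + (1-2t)\cdot b$; symmetrically, on $[\tfrac{1}{2},1]$ it lies in $\left[a,\tfrac{a+b}{2}\right]$ and equals $(2t-1)\cdot a + (2-2t)\cdot\tfrac{a+b}{2}$. The defining $P$-convexity inequality $|h(\alpha x+(1-\alpha)y)| \leq |h(x)|+|h(y)|$, applied with $h=f^{\prime\prime\prime}$, then bounds $|f^{\prime\prime\prime}(ta+(1-t)b)|$ on each half by a sum of only two of the three values $|f^{\prime\prime\prime}(a)|$, $|f^{\prime\prime\prime}((a+b)/2)|$, $|f^{\prime\prime\prime}(b)|$, so that the middle node gets introduced into the bound.

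It then remains to evaluate $\int_0^{1/2}|p(t)|\,dt$ and $\int_{1/2}^1 |p(t)|\,dt$. On $[0,\tfrac{1}{2}]$, $p(t)=\tfrac{1}{6}t^2(t-\tfrac{1}{2})$ is non-positive, so $|p(t)|=\tfrac{1}{6}t^2(\tfrac{1}{2}-t)$; a short antiderivative computation will give $\int_0^{1/2}|p(t)|\,dt = \tfrac{1}{1152}$, and the symmetry $t\leftrightarrow 1-t$ in the two pieces of $p$ makes the companion integral on $[\tfrac{1}{2},1]$ evaluate to the same number. Multiplying these constants by the pointwise $P$-convex upper bounds from the previous step and summing over the two halves will yield the stated inequality. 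The only non-mechanical step is the re-parameterization on each half that writes $ta+(1-t)b$ around $\tfrac{a+b}{2}$; once this is in place, everything else is routine integration and bookkeeping.
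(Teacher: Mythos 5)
The paper itself offers no proof of Theorem \ref{teo 1.3}: it is quoted from \cite{BBS} as background material, so the comparison here is against the natural argument your outline describes. Every individual ingredient you list checks out. The reparametrizations $ta+(1-t)b=(2t)\cdot\frac{a+b}{2}+(1-2t)\cdot b$ on $[0,\frac{1}{2}]$ and $ta+(1-t)b=(2t-1)\cdot a+(2-2t)\cdot\frac{a+b}{2}$ on $[\frac{1}{2},1]$ are correct, and so is the evaluation $\int_{0}^{1/2}\left\vert p(t)\right\vert dt=\int_{1/2}^{1}\left\vert p(t)\right\vert dt=\frac{1}{6}\cdot\frac{1}{192}=\frac{1}{1152}$.

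The gap is in the final bookkeeping, which you wave through as ``routine'': the midpoint value is picked up on \emph{both} halves. Writing $A=\left\vert f'''(a)\right\vert$, $M=\left\vert f'''\left(\frac{a+b}{2}\right)\right\vert$, $B=\left\vert f'''(b)\right\vert$, your pointwise bounds are $M+B$ on $[0,\frac{1}{2}]$ and $A+M$ on $[\frac{1}{2},1]$, so summing the two contributions gives
\[
\frac{(b-a)^{4}}{1152}\left[A+2M+B\right],
\]
with coefficient $2$ on the middle node rather than the claimed $1$. This is strictly weaker than the stated inequality whenever $f'''\left(\frac{a+b}{2}\right)\neq 0$, and no reshuffling of which two-point $P$-convexity bound you apply on each half repairs it: using the endpoints $a,b$ on both halves gives $2A+2B$, and mixing gives $A+M+2B$ or $2A+M+B$, none of which is $\leq A+M+B$ in general (test $A=M=B$). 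So as written the proposal establishes only the weaker bound with $2M$; it does recover Corollary \ref{co 1.3}, where $M=0$. To obtain the constant as stated you would need an essentially different estimate than a constant pointwise bound on each half --- or the coefficient of the midpoint term in the quoted theorem should in fact be $2$.
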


\begin{corollary}
\label{co 1.3} Let $f$ as in Theorem \ref{teo 1.3}. If $f^{\prime \prime
\prime \text{ }}\left( \frac{a+b}{2}\right) =0,$ then we have%
\begin{eqnarray}
&&\left\vert \int_{a}^{b}f(x)dx-\frac{b-a}{6}\left[ f(a)+4f\left( \frac{a+b}{%
2}\right) +f(b)\right] \right\vert  \label{1.4} \\
&\leq &\frac{\left( b-a\right) ^{4}}{1152}\left\{ \left\vert f^{\prime
\prime \prime \text{ }}(a)\right\vert +\left\vert f^{\prime \prime \prime 
\text{ }}(b)\right\vert \right\} .  \notag
\end{eqnarray}
\end{corollary}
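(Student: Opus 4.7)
The plan is to obtain the result as an immediate specialization of Theorem~\ref{teo 1.3}. Since the hypotheses of Corollary~\ref{co 1.3} are by assumption exactly those of Theorem~\ref{teo 1.3} (with the added, purely punctual condition $f'''((a+b)/2) = 0$), the inequality stated in Theorem~\ref{teo 1.3} applies directly. I would therefore write
\begin{equation*}
\left\vert \int_{a}^{b}f(x)\,dx-\frac{b-a}{6}\left[ f(a)+4f\!\left( \tfrac{a+b}{2}\right) +f(b)\right] \right\vert \leq \frac{(b-a)^{4}}{1152}\Big\{\vert f'''(a)\vert +\vert f'''(\tfrac{a+b}{2})\vert +\vert f'''(b)\vert \Big\}
\end{equation*}
as the starting point, then substitute the vanishing-midpoint hypothesis.

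Next, I would invoke the assumption $f'''((a+b)/2)=0$, so that $|f'''((a+b)/2)|=0$, and the middle summand on the right-hand side disappears. What remains is precisely the stated bound
\begin{equation*}
\frac{(b-a)^{4}}{1152}\Big\{\vert f'''(a)\vert +\vert f'''(b)\vert \Big\}.
\end{equation*}
No further manipulation or estimation is required, since no inequalities are applied beyond Theorem~\ref{teo 1.3} itself.

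I do not anticipate any real obstacle: the corollary is a one-line specialization in which a single nonnegative term in an upper bound is set to zero. The only thing worth checking carefully is that the hypothesis $f'''((a+b)/2) = 0$ is logically compatible with the $P$-convexity assumption on $|f'''|$ used in Theorem~\ref{teo 1.3} — and it is, since $P$-convexity of $|f'''|$ places no restriction forcing the midpoint value to be nonzero. Thus the proof reduces to citing Theorem~\ref{teo 1.3} and dropping the now-zero middle term.
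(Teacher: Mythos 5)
Your proof is correct and matches the intended (and only natural) argument: Corollary \ref{co 1.3} is an immediate specialization of Theorem \ref{teo 1.3}, obtained by setting $\left\vert f^{\prime\prime\prime}\left(\frac{a+b}{2}\right)\right\vert = 0$ and dropping that term from the upper bound. The paper states this corollary without proof precisely because the argument is this one-line substitution, so there is nothing to add.
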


The aim of this study is to establish some inequalities for $m-$convex
functions. In order to obtain our results, we modified Lemma \ref{lem 1.1}
given in the \cite{AH}.

\section{main results}

\begin{lemma}
\label{lem 2.1} Let $f:I\rightarrow 
\mathbb{R}
$ be a function such that $f^{\prime \prime \prime \text{ }}$be absolutely
continuous on $I^{\circ }$, the interior of I. Assume that $a,b\in I^{\circ
},$ with $a<b,$ $m\in (0,1]$ and $f^{\prime \prime \prime \text{ }}\in
L[a,b].$ Then, the following equality holds,%
\begin{eqnarray*}
&&\int_{a}^{mb}f(x)dx-\frac{mb-a}{6}\left[ f(a)+4f\left( \frac{a+b}{2}%
\right) +f(mb)\right] \\
&=&\left( mb-a\right) ^{4}\int_{0}^{1}p(t)f^{\prime \prime \prime
}(ta+m(1-t)b)dt,
\end{eqnarray*}%
where%
\begin{equation*}
p(t)=\left\{ 
\begin{array}{c}
\frac{1}{6}t^{2}\left( t-\frac{1}{2}\right) ,\text{ \ \ \ }t\in \lbrack 0,%
\frac{1}{2}] \\ 
\\ 
\frac{1}{6}(t-1)^{2}\left( t-\frac{1}{2}\right) ,\text{ \ \ }t\in (\frac{1}{2%
},]\text{\ \ \ .\ \ \ \ }%
\end{array}%
\right.
\end{equation*}
\end{lemma}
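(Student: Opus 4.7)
The proof is a direct integration-by-parts computation on the right-hand side. My plan is to denote $u(t)=ta+m(1-t)b$, so $u'(t)=-(mb-a)$, $u(0)=mb$, $u(1)=a$ and $u(1/2)=\tfrac{a+mb}{2}$, and to split the weight $p$ into the two pieces $p_{1}(t)=\tfrac{1}{6}t^{2}(t-\tfrac{1}{2})$ on $[0,\tfrac{1}{2}]$ and $p_{2}(t)=\tfrac{1}{6}(t-1)^{2}(t-\tfrac{1}{2})$ on $[\tfrac{1}{2},1]$. On each subinterval I integrate by parts three successive times against $f'''(u(t))$, using the antiderivative relation $\frac{d}{dt}f''(u(t))=-(mb-a)f'''(u(t))$ (and similarly one level down), which at each step produces a factor of $-\frac{1}{mb-a}$.

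At the first step, because $p_{1}(0)=p_{1}(\tfrac12)=0$ and $p_{2}(\tfrac12)=p_{2}(1)=0$, all $f''$ boundary terms vanish, leaving integrals of $p'_{1}(t)=\tfrac{t(3t-1)}{6}$ and $p'_{2}(t)=\tfrac{(t-1)(3t-2)}{6}$ against $f''(u)$. The second integration by parts is the critical step: the boundary contribution at $t=\tfrac12$ from the first piece is $-\frac{1}{24(mb-a)^{2}}f'\!\left(\tfrac{a+mb}{2}\right)$ and from the second piece it is $+\frac{1}{24(mb-a)^{2}}f'\!\left(\tfrac{a+mb}{2}\right)$, so these $f'$-values cancel when the two pieces are summed. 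What remains is $\int_{0}^{1/2}(t-\tfrac16)f'(u)\,dt+\int_{1/2}^{1}(t-\tfrac56)f'(u)\,dt$, each multiplied by $(mb-a)^{2}$.

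The third integration by parts produces boundary values of $f$ at $a$, $\tfrac{a+mb}{2}$, and $mb$. Collecting the coefficients gives $\tfrac{1}{6}f(a)+\tfrac{2}{3}f\!\left(\tfrac{a+mb}{2}\right)+\tfrac{1}{6}f(mb)$, i.e.\ the Simpson weights $1:4:1$, together with a remaining term $\frac{1}{mb-a}\int_{0}^{1}f(u(t))\,dt$. A linear change of variables $x=u(t)$ converts this last integral into $\frac{1}{(mb-a)^{2}}\int_{a}^{mb}f(x)\,dx$, and multiplying through by the overall factor $(mb-a)^{4}$ yields the claimed identity.

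The only real obstacle is bookkeeping: one must track the signs coming from $u'(t)=-(mb-a)$ through three iterated integrations by parts and verify that the $f'$ midpoint contributions cancel across the two subintervals; once those cancellations are in place the $1,4,1$ Simpson coefficients fall out automatically. (As a check on the statement, the midpoint that naturally appears from $u(1/2)$ is $\tfrac{a+mb}{2}$ rather than $\tfrac{a+b}{2}$, which coincides only when $m=1$.)
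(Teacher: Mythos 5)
Your computation is correct and follows exactly the route the paper indicates (the paper leaves the details to the reader): three successive integrations by parts against $f^{\prime\prime\prime}(ta+m(1-t)b)$ on each half-interval, cancellation of the $f^{\prime}$ boundary contributions at $t=\tfrac12$, and a linear change of variables to recover $\int_{a}^{mb}f(x)\,dx$. Your parenthetical check is in fact the essential point: the identity you prove has $f\!\left(\tfrac{a+mb}{2}\right)$ where the lemma prints $f\!\left(\tfrac{a+b}{2}\right)$, and these coincide only when $m=1$, so the lemma as stated (and the theorems that invoke it) should be read with the midpoint $\tfrac{a+mb}{2}$; aside from a harmless slip in tracking powers of $(mb-a)$ in your prose, the argument is sound.
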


A simple proof of the equality can be done by performing an integration by
parts in the integrals from the right side and changing the variable. The
details are left to the interested reader.

\begin{theorem}
\label{teo 2.1} Let $f:I\subset \lbrack 0,b^{\ast }]\rightarrow 
\mathbb{R}
,$ be a differentiable function on $I^{\circ }$ such that $f^{\prime \prime
\prime \text{ }}\in L[a,b]$ where $a,b\in I$ with $a<b,$ $b^{\ast }>0.$ If $%
\left\vert f^{\prime \prime \prime \text{ }}\right\vert ^{q}$ is $m-$convex
on $[a,b]$ for $m\in (0,1],$ $q>1,$ then the following inequality holds:%
\begin{eqnarray*}
&&\left\vert \int_{a}^{mb}f(x)dx-\frac{mb-a}{6}\left[ f(a)+4f\left( \frac{a+b%
}{2}\right) +f(mb)\right] \right\vert \\
&\leq &\frac{\left( mb-a\right) ^{4}}{96}\left( \frac{\Gamma (2p+1)\Gamma
(p+1)}{\Gamma (3p+1)}\right) ^{\frac{1}{p}} \\
&&\times \left\{ \left( \frac{\left\vert f^{\prime \prime \prime \text{ }%
}(a)\right\vert ^{q}+3m\left\vert f^{\prime \prime \prime \text{ }%
}(b)\right\vert ^{q}}{4}\right) ^{\frac{1}{q}}+\left( \frac{3\left\vert
f^{\prime \prime \prime \text{ }}(a)\right\vert ^{q}+m\left\vert f^{\prime
\prime \prime \text{ }}(b)\right\vert ^{q}}{4}\right) ^{\frac{1}{q}}\right\}
.
\end{eqnarray*}
\end{theorem}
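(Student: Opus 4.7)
The plan is to start from the identity of Lemma~\ref{lem 2.1}, take absolute values, and split the integral against $p(t)$ at the break point $t=\frac{1}{2}$, so that on $[0,\frac{1}{2}]$ one has $|p(t)|=\frac{1}{6}t^{2}(\frac{1}{2}-t)$ and on $[\frac{1}{2},1]$ one has $|p(t)|=\frac{1}{6}(t-1)^{2}(t-\frac{1}{2})$. This reduces the task to bounding two structurally symmetric integrals of the form $\int |p(t)|\,|f'''(ta+m(1-t)b)|\,dt$, one over each half of $[0,1]$.

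On each of the two subintervals I would apply H\"older's inequality with exponents $p$ and $q$, separating the weight $|p(t)|$ from the derivative factor. The weight integrals are evaluated via the substitutions $u=2t$ on $[0,\frac{1}{2}]$ and $u=2(1-t)$ on $[\frac{1}{2},1]$; each reduces to a constant multiple of the Beta integral $\int_{0}^{1}u^{2p}(1-u)^{p}\,du = B(2p+1,p+1)$, producing the Gamma ratio that appears in the stated bound together with an overall factor of $\frac{1}{6^{p}\cdot 2^{3p+1}}$ before taking the $1/p$ power.

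For the derivative factor I would invoke the $m$-convexity of $|f'''|^{q}$ to obtain the pointwise estimate $|f'''(ta+m(1-t)b)|^{q}\le t|f'''(a)|^{q}+m(1-t)|f'''(b)|^{q}$, and then integrate this linear expression. A direct calculation yields $\frac{1}{8}\bigl(|f'''(a)|^{q}+3m|f'''(b)|^{q}\bigr)$ on $[0,\frac{1}{2}]$ and $\frac{1}{8}\bigl(3|f'''(a)|^{q}+m|f'''(b)|^{q}\bigr)$ on $[\frac{1}{2},1]$, which, after raising to the power $1/q$ and rescaling the denominator from $8$ to $4$, produce the two symmetric bracketed terms on the right-hand side of the theorem.

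I expect the only real obstacle to be the bookkeeping of numerical constants rather than any analytic subtlety. Once the two H\"older bounds are added, the overall prefactor is $\frac{1}{6\cdot 2^{3+1/p}}\cdot\frac{4^{1/q}}{8^{1/q}}=\frac{1}{6\cdot 2^{3+1/p+1/q}}$, which collapses to $\frac{1}{6\cdot 16}=\frac{1}{96}$ by the conjugacy relation $\frac{1}{p}+\frac{1}{q}=1$; multiplying by $(mb-a)^{4}$ and the Beta/Gamma factor then gives exactly the claimed inequality.
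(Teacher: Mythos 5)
Your proposal follows the paper's own proof essentially step for step: the identity of Lemma \ref{lem 2.1}, the split at $t=\tfrac12$ where $|p(t)|$ changes form, H\"older with exponents $p,q$ on each half, the $m$-convexity estimate giving $\int_0^{1/2}|f'''(ta+m(1-t)b)|^q\,dt\le \tfrac18\bigl(|f'''(a)|^q+3m|f'''(b)|^q\bigr)$ and its mirror image, the Beta-integral evaluation of the weight, and the final bookkeeping via $\tfrac1p+\tfrac1q=1$ yielding $\tfrac1{96}$. The one discrepancy worth noting is that your (correct) substitution gives $\int_0^{1/2}\bigl(t^2(\tfrac12-t)\bigr)^p dt=\tfrac{1}{2^{3p+1}}B(2p+1,p+1)=\tfrac{\Gamma(2p+1)\Gamma(p+1)}{2^{3p+1}\Gamma(3p+2)}$, with $\Gamma(3p+2)$ in the denominator, whereas the theorem and the paper's proof write $\Gamma(3p+1)$; since $\Gamma(3p+2)=(3p+1)\Gamma(3p+1)>\Gamma(3p+1)$, your computation actually proves a sharper constant from which the stated bound follows a fortiori, and the $\Gamma(3p+2)$ version is the one consistent with Corollary \ref{co 1.1} and Remark \ref{rem 2.1}.
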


\begin{proof}
From Lemma \ref{lem 2.1} and using H\"{o}lder inequality we have%
\begin{eqnarray*}
&&\left\vert \int_{a}^{mb}f(x)dx-\frac{mb-a}{6}\left[ f(a)+4f\left( \frac{a+b%
}{2}\right) +f(mb)\right] \right\vert \\
&\leq &\frac{\left( mb-a\right) ^{4}}{6}\left\{ \left( \int_{0}^{\frac{1}{2}%
}\left( t^{2}\left( \frac{1}{2}-t\right) \right) ^{p}dt\right) ^{\frac{1}{p}%
}\left( \int_{0}^{\frac{1}{2}}\left\vert f^{\prime \prime \prime \text{ }%
}(ta+m(1-t)b)\right\vert ^{q}dt\right) ^{\frac{1}{q}}\right. \\
&&\left. +\left( \int_{\frac{1}{2}}^{1}\left( (t-1)^{2}\left( t-\frac{1}{2}%
\right) \right) ^{p}dt\right) ^{\frac{1}{p}}\left( \int_{\frac{1}{2}%
}^{1}\left\vert f^{\prime \prime \prime \text{ }}(ta+m(1-t)b)\right\vert
^{q}dt\right) ^{\frac{1}{q}}\right\} .
\end{eqnarray*}%
Since $m-$convexity of $\left\vert f^{\prime \prime \prime \text{ }%
}\right\vert ^{q},$ we have%
\begin{eqnarray*}
\int_{0}^{\frac{1}{2}}\left\vert f^{\prime \prime \prime \text{ }%
}(ta+m(1-t)b)\right\vert ^{q}dt &\leq &\int_{0}^{\frac{1}{2}}\left[
t\left\vert f^{\prime \prime \prime \text{ }}(a)\right\vert
^{q}+m(1-t)\left\vert f^{\prime \prime \prime \text{ }}(b)\right\vert ^{q}%
\right] dt \\
&=&\frac{\left\vert f^{\prime \prime \prime \text{ }}(a)\right\vert
^{q}+3m\left\vert f^{\prime \prime \prime \text{ }}(b)\right\vert ^{q}}{8},
\end{eqnarray*}%
\begin{eqnarray*}
\int_{\frac{1}{2}}^{1}\left\vert f^{\prime \prime \prime \text{ }%
}(ta+m(1-t)b)\right\vert ^{q}dt &\leq &\int_{\frac{1}{2}}^{1}\left[
t\left\vert f^{\prime \prime \prime \text{ }}(a)\right\vert
^{q}+m(1-t)\left\vert f^{\prime \prime \prime \text{ }}(b)\right\vert ^{q}%
\right] dt \\
&=&\frac{3\left\vert f^{\prime \prime \prime \text{ }}(a)\right\vert
^{q}+m\left\vert f^{\prime \prime \prime \text{ }}(b)\right\vert ^{q}}{8}.
\end{eqnarray*}%
Using the fact that 
\begin{equation*}
\int_{0}^{\frac{1}{2}}\left( t^{2}\left( \frac{1}{2}-t\right) \right)
^{p}dt=\int_{\frac{1}{2}}^{1}\left( (t-1)^{2}\left( t-\frac{1}{2}\right)
\right) ^{p}dt=\frac{\Gamma \left( 2p+1\right) \Gamma \left( p+1\right) }{%
2^{3p+1}\Gamma \left( 3p+1\right) },
\end{equation*}%
where $\Gamma $ is the Gamma function, we obtain%
\begin{eqnarray*}
&&\left\vert \int_{a}^{mb}f(x)dx-\frac{mb-a}{6}\left[ f(a)+4f\left( \frac{a+b%
}{2}\right) +f(mb)\right] \right\vert \\
&\leq &\frac{\left( mb-a\right) ^{4}}{96}\left( \frac{\Gamma (2p+1)\Gamma
(p+1)}{\Gamma (3p+1)}\right) ^{\frac{1}{p}} \\
&&\times \left\{ \left( \frac{\left\vert f^{\prime \prime \prime \text{ }%
}(a)\right\vert ^{q}+3m\left\vert f^{\prime \prime \prime \text{ }%
}(b)\right\vert ^{q}}{4}\right) ^{\frac{1}{q}}+\left( \frac{3\left\vert
f^{\prime \prime \prime \text{ }}(a)\right\vert ^{q}+m\left\vert f^{\prime
\prime \prime \text{ }}(b)\right\vert ^{q}}{4}\right) ^{\frac{1}{q}}\right\}
,
\end{eqnarray*}%
which is the required.
\end{proof}

\begin{remark}
\label{rem 2.1} In Theorem \ref{teo 2.1}, if we choose $m=1,$ we have the
inequality in (\ref{1.2}).
\end{remark}

\begin{theorem}
\label{teo 2.2} Let the assumptions of Theorem \ref{teo 2.1} hold with $%
q\geq 1.$ Then 
\begin{eqnarray*}
&&\left\vert \int_{a}^{mb}f(x)dx-\frac{mb-a}{6}\left[ f(a)+4f\left( \frac{a+b%
}{2}\right) +f(mb)\right] \right\vert \\
&\leq &\frac{\left( mb-a\right) ^{4}}{1152}\left\{ \left( \frac{3\left\vert
f^{\prime \prime \prime \text{ }}(a)\right\vert ^{q}+7m\left\vert f^{\prime
\prime \prime \text{ }}(b)\right\vert ^{q}}{10}\right) ^{\frac{1}{q}}+\left( 
\frac{7\left\vert f^{\prime \prime \prime \text{ }}(a)\right\vert
^{q}+3m\left\vert f^{\prime \prime \prime \text{ }}(b)\right\vert ^{q}}{10}%
\right) ^{\frac{1}{q}}\right\} .
\end{eqnarray*}
\end{theorem}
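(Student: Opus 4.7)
The plan is to follow the same initial reduction as in Theorem \ref{teo 2.1}, but in place of Hölder's inequality I would invoke the power-mean inequality, which is the natural tool once we only assume $q\geq 1$ instead of $q>1$. Starting from Lemma \ref{lem 2.1}, after taking absolute values and splitting the integral at $t=1/2$, the bound takes the form
\begin{equation*}
\frac{(mb-a)^4}{6}\left\{\int_0^{1/2}t^2\!\left(\tfrac12-t\right)\!|f'''(ta+m(1-t)b)|\,dt+\int_{1/2}^1(t-1)^2\!\left(t-\tfrac12\right)\!|f'''(ta+m(1-t)b)|\,dt\right\}.
\end{equation*}
To each of the two pieces I would apply the power-mean inequality $\int w\,g\leq (\int w)^{1-1/q}(\int w g^q)^{1/q}$ with weight $w(t)=|p(t)|$ (up to the factor $1/6$), which is the critical step that replaces Hölder.

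Next I would evaluate the weight integral explicitly: a direct computation yields
\begin{equation*}
\int_{0}^{1/2} t^{2}\!\left(\tfrac12-t\right)dt=\int_{1/2}^{1}(t-1)^{2}\!\left(t-\tfrac12\right)dt=\frac{1}{192},
\end{equation*}
which gives the $\tfrac{1}{1152}$ prefactor once combined with the $1/6$ from the definition of $p(t)$ and raised to the power $1-1/q$ and $1/q$ respectively (these exponents combine to $1$). Using $m$-convexity of $|f'''|^q$ gives $|f'''(ta+m(1-t)b)|^q\leq t|f'''(a)|^q+m(1-t)|f'''(b)|^q$, so the remaining task is to compute the two weighted moments
\begin{equation*}
\int_0^{1/2}t^2\!\left(\tfrac12-t\right)t\,dt,\qquad \int_0^{1/2}t^2\!\left(\tfrac12-t\right)(1-t)\,dt,
\end{equation*}
together with their symmetric analogues on $[1/2,1]$ (handled by the substitution $u=1-t$).

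A routine computation should give the first moment $=1/640$ and the second $=7/1920$; dividing by the weight mass $1/192$ produces the normalized coefficients $3/10$ and $7/10$, which are exactly the numbers appearing in the claimed bound. The second half of the integral, via $u=1-t$, swaps the roles of $t$ and $1-t$, so its coefficients come out as $7/10$ in front of $|f'''(a)|^q$ and $3m/10$ in front of $|f'''(b)|^q$. Assembling the two pieces yields the stated inequality. The only real obstacle is bookkeeping in the moment calculations, in particular getting the $3/10,7/10$ split correct and keeping track of the $(1-1/q)+(1/q)=1$ cancellation that reduces the $(1/1152)^{1-1/q}(1/1152)^{1/q}$ product back to the clean prefactor $1/1152$; once those are handled, the $m=1$ case recovers Corollary \ref{co 1.2}, providing a useful consistency check.
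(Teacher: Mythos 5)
Your proposal is correct and follows essentially the same route as the paper: split at $t=1/2$, apply the power-mean inequality with weight $|p(t)|$, use $m$-convexity of $|f'''|^q$, and evaluate the weight mass $1/192$ and the moments $1/640$ and $7/1920$ to get the normalized coefficients $3/10$ and $7/10$. All the constants you report check out, so no further comment is needed.
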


\begin{proof}
From Lemma \ref{lem 2.1} , using the well known power-mean inequality and $%
m- $convexity of $\left\vert f^{\prime \prime \prime \text{ }}\right\vert
^{q}$, we have 
\begin{eqnarray*}
&&\left\vert \int_{a}^{mb}f(x)dx-\frac{mb-a}{6}\left[ f(a)+4f\left( \frac{a+b%
}{2}\right) +f(mb)\right] \right\vert \\
&\leq &\frac{\left( b-a\right) ^{4}}{6}\left\{ \left( \int_{0}^{\frac{1}{2}%
}t^{2}\left( \frac{1}{2}-t\right) dt\right) ^{1-\frac{1}{q}}\left( \int_{0}^{%
\frac{1}{2}}t^{2}\left( \frac{1}{2}-t\right) \left\vert f^{\prime \prime
\prime \text{ }}(ta+m(1-t)b)\right\vert ^{q}dt\right) ^{\frac{1}{q}}\right.
\\
&&\left. +\left( \int_{\frac{1}{2}}^{1}(t-1)^{2}\left( t-\frac{1}{2}\right)
dt\right) ^{1-\frac{1}{q}}\left( \int_{\frac{1}{2}}^{1}(t-1)^{2}\left( t-%
\frac{1}{2}\right) \left\vert f^{\prime \prime \prime \text{ }%
}(ta+m(1-t)b)\right\vert ^{q}dt\right) ^{\frac{1}{q}}\right\} \\
&\leq &\frac{\left( b-a\right) ^{4}}{6}\left\{ \left( \int_{0}^{\frac{1}{2}%
}t^{2}\left( \frac{1}{2}-t\right) dt\right) ^{1-\frac{1}{q}}\left( \int_{0}^{%
\frac{1}{2}}t^{2}\left( \frac{1}{2}-t\right) \left[ t\left\vert f^{\prime
\prime \prime \text{ }}(a)\right\vert ^{q}+m(1-t)\left\vert f^{\prime \prime
\prime \text{ }}(b)\right\vert ^{q}\right] dt\right) ^{\frac{1}{q}}\right. \\
&&\left. +\left( \int_{\frac{1}{2}}^{1}(t-1)^{2}\left( t-\frac{1}{2}\right)
dt\right) ^{1-\frac{1}{q}}\left( \int_{\frac{1}{2}}^{1}(t-1)^{2}\left( t-%
\frac{1}{2}\right) \left[ t\left\vert f^{\prime \prime \prime \text{ }%
}(a)\right\vert ^{q}+m(1-t)\left\vert f^{\prime \prime \prime \text{ }%
}(b)\right\vert ^{q}\right] dt\right) ^{\frac{1}{q}}\right\} .
\end{eqnarray*}%
By simple calculations we obtain%
\begin{eqnarray*}
&&\left\vert \int_{a}^{mb}f(x)dx-\frac{mb-a}{6}\left[ f(a)+4f\left( \frac{a+b%
}{2}\right) +f(mb)\right] \right\vert \\
&\leq &\frac{\left( b-a\right) ^{4}}{6}\left( \frac{1}{192}\right) ^{1-\frac{%
1}{q}}\left\{ \left( \frac{7\left\vert f^{\prime \prime \prime \text{ }%
}(a)\right\vert ^{q}+3m\left\vert f^{\prime \prime \prime \text{ }%
}(b)\right\vert ^{q}}{1920}\right) ^{\frac{1}{q}}+\left( \frac{3\left\vert
f^{\prime \prime \prime \text{ }}(a)\right\vert ^{q}+7m\left\vert f^{\prime
\prime \prime \text{ }}(b)\right\vert ^{q}}{1920}\right) ^{\frac{1}{q}%
}\right\}
\end{eqnarray*}%
which is the desired result.
\end{proof}

\begin{remark}
\label{rem 2.2} In Theorem \ref{teo 2.2}, if we choose $m=1,$ we have the
inequality in (\ref{1.3}).
\end{remark}

\begin{remark}
\label{rem 2.3} In Theorem \ref{teo 2.2}, if we choose $m=1$ and $q=1,$ we
have the inequality in (\ref{1.4}).
\end{remark}

\end{document}